\documentclass [12pt,letterpaper,reqno]{amsart}

\DeclareMathSymbol{\twoheadrightarrow} {\mathrel}{AMSa}{"10}

\def\Q{{\mathbb Q}}
\def\Z{{\mathbb Z}}

\def\F{{\mathbb F}}

\def\Gal{\mathrm{Gal}}

\def\Perm{\mathrm{Perm}}

\def\End{\mathrm{End}}

\def\Aut{\mathrm{Aut}}
\def\Hom{\mathrm{Hom}}

    \def\RR{\mathfrak{R}}

\def\fchar{\mathrm{char}}

\def\dim{\mathrm{dim}}

\newtheorem{thm}{Theorem}[section]
\newtheorem{lem}[thm]{Lemma}

\newtheorem{prop}[thm]{Proposition}
\theoremstyle{definition}

\newtheorem{rem}[thm]{Remark}

\title[Non-isogenous  hyperelliptic jacobians]
{Non-isogenous   elliptic curves and hyperelliptic jacobians II}
\author[Yuri G. Zarhin]{Yuri G. Zarhin}
\address{Department of Mathematics, Pennsylvania State University,
University Park, PA 16802, USA}
\email{zarhin\char`\@math.psu.edu}
\dedicatory{To Yuri Ivanovich Manin on the occasion of his 85th birthday}
\thanks{The author  was partially supported by Simons Foundation Collaboration grant   \# 585711.
This work was done during his stay in 2022 at the Max-Planck Institut f\"ur Mathematik (Bonn, Germany), whose hospitality and support are gratefully acknowledged.}

\begin{document}
\begin{abstract}
Let $K$ be a field of characteristic different from $2$, $\bar{K}$ its algebraic closure.
Let $n \ge 3$ be an odd integer.
Let $f(x)$ and $h(x)$ be degree $n$ polynomials with coefficients in $K$ and without repeated roots.
Let us consider genus $(n-1)/2$ hyperelliptic curves
$C_f: y^2=f(x)$ and $C_h: y^2=h(x)$, and their jacobians $J(C_f)$ and $J(C_h)$, which are
$(n-1)/2$-dimensional abelian varieties defined over $K$. 

Suppose that one of the
polynomials is irreducible and the other splits completely over $K$.
We prove that if $J(C_f)$ and $J(C_h)$ are  isogenous over $\bar{K}$ then there is an (odd) prime $\ell$
dividing $n$ such that both  endomorphism algebras of $J(C_f)$ and  of $J(C_h)$ contain
a subfield that  is isomorphic to
the field of $\ell$th roots of $1$.

\end{abstract}

\subjclass[2010]{14H40, 14K05, 11G30, 11G10}
\keywords{hyperelliptic curves, jacobians, isogenies of abelian varieties}

\maketitle
\section{Definitions, notations, statements}
This paper is a follow up of \cite{ZarhinMRL22} and we use its notation. (See also \cite{ZarhinSh03,ZarhinMZ06}.)
In particular, if $f(x)\in K[x]$ is a polynomial of odd degree $n=2g+1$  without repeated roots and with coefficients
in a field $K$, and with $\mathrm{char}(K)\ne 2$,
 then we write $C_f$ for the smooth projective model of the plane curve $y^2=f(x)$
and $J(C_f)$ for its jacobian, which is a $g$-dimensional abelian variety over $K$. 
We fix an  algebraic closure  $\bar{K}$ of $K$ and write $\Gal(K)=\Aut(\bar{K}/K)$ for the group of its $K$-linear automorpisms.
We write $\RR_f\subset \bar{K}$ for the $n$-element set of roots of $f(x)$,  $K(\RR_f)$ for the splitting field of $f(x)$ and
$\Gal(f/K)$ for the Galois group
$$\Gal(K(\RR_f)/K)=\Aut(K(\RR_f)/K)$$
of $f(x)$. As usual, one may view $\Gal(f/K)$ as a certain permutation subgroup of the group $\Perm(\RR_f)$ of all permutations of $\RR_f$.


Throughout this paper, $n\ge 3$ is an odd integer, $f(x)$ and $h(x)$ are degree $n$ polynomials with coefficients in $K$ and without repeated roots,
$$C_f: y^2=f(x), \ C_h: y^2=h(x)$$
are the corresponding genus $(n-1)/2$ hyperelliptic curves over $K$, whose jacobians we denote by $J(C_f)$ and $J(C_h)$, respectively. These jacobians are $(n-1)/2$-dimensional abelian varieties defined over $K$.

The main result of this paper is the following assertion.

\begin{thm}
\label{endoH}
Suppose that $n\ge 3$ is an odd prime.
Let $K$ be a field of characteristic different from
$2$. 
Let $f(x), h(x) \in K[x]$ be degree $n$ polynomials without repeated roots.  Suppose that one of the
polynomials is irreducible and the other is reducible.

If the corresponding hyperelliptic jacobians $J(C_f)$ and $J(C_h)$ are  isogenous over $\bar{K}$ then they both
are abelian varieties of CM type over $\bar{K}$ with multiplication by  the $n$th cyclotomic field $\Q(\zeta_n)$.
\end{thm}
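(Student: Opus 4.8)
The argument is symmetric in $f$ and $h$, so I will assume throughout that $f$ is irreducible and $h$ is reducible. The engine of the proof is the $2$-torsion: there is a canonical $\Gal(K)$-equivariant isomorphism $J(C_f)[2]\cong Q_f:=\F_2^{\RR_f}/\F_2\cdot(1,\dots,1)$, a space of dimension $n-1=2\dim J(C_f)$ on which $\Gal(K)$ acts through $\Gal(f/K)$, and likewise $J(C_h)[2]\cong Q_h$. I will use two facts. First, since $n$ is prime and $f$ is irreducible, $\Gal(f/K)$ is a transitive — hence primitive — subgroup of $\Perm(\RR_f)$ and so contains an $n$-cycle $\sigma$; restricting $Q_f$ to $\langle\sigma\rangle$ identifies it with $\F_2[\Z/n]/\F_2\cdot(1,\dots,1)\cong\F_2[x]/(\Phi_n(x))$, all of whose Jordan--H\"older constituents have $\F_2$-dimension $\ord_n(2)\ge 2$. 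Hence $Q_f$ has \emph{no} trivial $\Gal(K)$-constituent, and the same is true over any finite extension $L$ for which $\Gal(f/L)$ is still transitive on $\RR_f$. Second, since $h$ is reducible, $\RR_h$ splits into $\ge 2$ nonempty $\Gal(K)$-stable pieces, whose partial ``all-ones'' vectors give $\ge 2$ independent $\Gal(K)$-fixed vectors in $\F_2^{\RR_h}$, hence a nonzero fixed vector — a trivial submodule — in $Q_h$; this persists over every finite extension, because a subgroup of an intransitive permutation group is intransitive.

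Next I would bring in the isogeny. Fix a finite Galois extension $L/K$ over which an isogeny $\pi\colon J(C_f)\to J(C_h)$ is defined; then $V_\ell(J(C_f))\cong V_\ell(J(C_h))$ as $\Gal(L)$-modules for every $\ell\ne\fchar(K)$, so by Brauer--Nesbitt the semisimplifications of $J(C_f)[\ell]$ and $J(C_h)[\ell]$ agree as $\Gal(L)$-modules; in particular $Q_f^{\mathrm{ss}}\cong Q_h^{\mathrm{ss}}$ over $\Gal(L)$. Because $L/K$ is Galois, $\Gal(f/L)$ is a normal subgroup of the primitive group $\Gal(f/K)$, hence trivial or transitive; transitivity is impossible by the two facts above (the right side always has a trivial constituent, the left side has none). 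So $f$ splits completely over $L$, whence $n\mid[L:K]$ and $\Gal(L/K)$ surjects onto $\Gal(f/K)$. The same comparison over $K$ itself shows that $J(C_f)$ and $J(C_h)$ are \emph{not} isogenous over $K$.

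Since $\pi$ does not descend to $K$, the usual descent cocycle $c_\tau:=\deg(\pi)^{-1}\,\hat\pi\circ{}^{\tau}\pi$ (with $\hat\pi$ an isogeny satisfying $\hat\pi\circ\pi=[\deg\pi]$) is a nontrivial cocycle $\Gal(K)\to\bigl(\End^0_{\bar K}J(C_f)\bigr)^\times$ which is trivial on $\Gal(L)$, hence factors through $\Gal(L/K)$. If $\End^0_{\bar K}J(C_f)$ were $\Q$, this would be a nontrivial homomorphism into $\Q^\times$, so of order $2$, and $J(C_h)$ would be $K$-isogenous to the quadratic twist of $J(C_f)$ by some $d\in K^\times$; but quadratic twisting does not alter the mod-$2$ Galois representation, so $Q_h^{\mathrm{ss}}$ would be isomorphic to $Q_f^{\mathrm{ss}}$ as a $\Gal(K)$-module, contradicting the first paragraph. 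Hence $\End^0_{\bar K}J(C_f)\ne\Q$, and likewise $\End^0_{\bar K}J(C_h)\ne\Q$ since these algebras are isomorphic.

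It remains to identify the endomorphism algebra, and this is the step I expect to be the real obstacle. Every $\bar K$-endomorphism of $J(C_f)$ commutes with the $\Gal(K)$-action on $Q_f$, which contains the order-$n$ element $\sigma$; since $n$ is odd, $\Phi_n$ is separable modulo $2$, so the commutant of $\sigma$ in $\End_{\F_2}(Q_f)$ is the \'etale $\F_2$-algebra $\F_2[x]/(\Phi_n(x))$ of dimension $n-1$. Thus $\End_{\bar K}J(C_f)$ has $\Z$-rank at most $n-1=2\dim J(C_f)$, with commutative and reduced reduction modulo $2$. The plan is to combine this bound, the non-vanishing just proved, and the fact that $\sigma$ acts on $Q_f$ as a primitive $n$-th root of unity to conclude that $\End^0_{\bar K}J(C_f)$ actually \emph{contains} a primitive $n$-th root of unity; a subfield of degree $n-1=2\dim J(C_f)$ then exhausts the whole endomorphism algebra, forcing $J(C_f)$ to be of CM type with $\End^0_{\bar K}J(C_f)=\Q(\zeta_n)$. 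The isomorphism $\End^0_{\bar K}J(C_h)\cong\Q(\zeta_n)$ and the equality $2\dim J(C_h)=[\Q(\zeta_n):\Q]$ then give the same conclusion for $J(C_h)$. The delicate point is precisely the passage from ``$\End^0_{\bar K}J(C_f)$ is a nontrivial algebra of bounded rank, reduced at $2$, commuting with an order-$n$ Galois symmetry'' to ``$\Q(\zeta_n)\hookrightarrow\End^0_{\bar K}J(C_f)$'': one must exclude a proper subfield and a noncommutative algebra, and it is here that the finer structure theory of hyperelliptic jacobians with an irreducible defining polynomial of prime degree — and the primality of $n$ — enters.
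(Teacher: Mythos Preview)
Your opening moves --- the comparison of the semisimplified $2$-torsion over any field where $f$ stays irreducible, and the descent cocycle attached to the isogeny --- are essentially the paper's strategy. But the last step is a genuine gap, not just a missing detail, and your sketch for it contains an error: a $\bar K$-endomorphism $\phi$ of $J(C_f)$ does \emph{not} in general commute with the $\Gal(K)$-action on $J(C_f)[2]$; the correct relation is $\sigma(\phi(x))=({}^{\sigma}\phi)(\sigma(x))$, so commutation holds only for $\phi$ defined over $K$. Hence your rank bound via the commutant of the $n$-cycle applies at best to $\End_K(J(C_f))$, not to $\End_{\bar K}(J(C_f))$, and there is no path from ``$\End^{0}_{\bar K}\ne\Q$'' to ``$\Q(\zeta_n)\hookrightarrow\End^{0}_{\bar K}$'' along these lines.

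The paper closes the gap by putting the cocycle on the \emph{other} side. Because $h$ is reducible and $n$ is prime, $\#\Gal(h/K)$ is prime to $n$, hence so is $[K(J(C_h)[4]):K]$; replacing $K$ by $K(J(C_h)[4])$ keeps $f$ irreducible and, by Silverberg's theorem, makes every endomorphism of $Y:=J(C_h)$ defined over $K$. Now choose an isogeny $\mu:X\to Y$ with $\mu(X[2])\ne 0$ and write $\sigma(\mu)=c(\sigma)\mu$; since $\End(Y)$ carries the trivial Galois action this is a genuine \emph{homomorphism} $\chi:\Gal(K(X[4])/K)\to\End^{0}(Y)^{*}$. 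The image $H$ of $\ker\chi$ in $\Gal(f/K)$ cannot be transitive on $\RR_f$ --- this is exactly your $2$-torsion obstruction: over the fixed field $\mu$ would be Galois-equivariant and give a nonzero map $X[2]\to Y[2]$ with $Y[2]$ a trivial module --- and a normal subgroup of a transitive group on a set of prime size $n$ is either transitive or of index divisible by $n$. Thus $n\mid\#\mathrm{Im}(\chi)$, producing an element of exact order $n$ in $\End^{0}(Y)^{*}$; then $\Q(\zeta_n)\hookrightarrow\End^{0}(Y)$, and since $[\Q(\zeta_n):\Q]=n-1=2\dim Y$ both $Y$ and $X$ are of CM type with multiplication by $\Q(\zeta_n)$.
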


\begin{rem}
Theorem \ref{endoH} was proven in \cite{ZarhinMRL22} under the additional assumption that $2$ is a primitive root modulo $n$.

\end{rem}

The next assertion may be viewed as a  partial generalization of  Theorem \ref{endoH} to the case of arbitrary odd $n$.

\begin{thm}
\label{endoH2}
Suppose that $n\ge 3$ is an odd integer.
Let $K$ be a field of characteristic different from
$2$. 
Let $f(x), h(x) \in K[x]$ be degree $n$ polynomials without repeated roots.  Suppose that $f(x)$ is irreducible over $K$.

Assume additionally that 
the order of the Galois group $\Gal(h/K)$ of $h(x)$ is prime to $n$.
(E.g., each irreducible factor of $h(x)$  over $K$ has degree $1$ or $2$.)

If the corresponding hyperelliptic jacobians $J(C_f)$ and $J(C_h)$ are  isogenous over $\bar{K}$ then 
there is a prime divisor $\ell$ of $n$ such that both endomorphism algebras $\End^0(J(C_f))$ and  $\End^0(J(C_h))$
contain an invertible element of multiplicative order $\ell$.
\end{thm}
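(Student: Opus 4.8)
The plan is to reduce everything to the $2$-torsion and to a $1$-cocycle produced by the hypothetical isogeny. Write $A=J(C_f)$, $B=J(C_h)$, fix an isogeny $\phi\colon A_{\bar K}\to B_{\bar K}$ over $\bar K$, and put $D:=\End^0(B_{\bar K})$; conjugation by $\phi$ identifies $D$ with $\End^0(A_{\bar K})$, so it will be enough to exhibit a prime $\ell\mid n$ and $u\in D^{\times}$ with $u^{\ell}=1\ne u$. Since $n$ is odd, I would use the classical identification of $A[2]$ with $Q_{\RR_f}$, the quotient of the permutation $\F_2$-module on $\RR_f$ by the line of constant vectors, under which $\Gal(K)$ acts through the faithful permutation action of $\Gal(f/K)\subseteq\Perm(\RR_f)$; similarly $B[2]\cong Q_{\RR_h}$, on which $\Gal(K)$ acts through $\Gal(h/K)$. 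Irreducibility of $f$ makes $\Gal(f/K)$ transitive, so $n\mid|\Gal(f/K)|$.

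First I would find the right Galois element. Let $N$ be the kernel of the action of $\Gal(K)$ on $B[2]$; then $[\Gal(K):N]=|\Gal(h/K)|$ is prime to $n$, so the image of $N$ in $\Gal(f/K)$ has index prime to $n$, hence order divisible by every prime $\ell\mid n$. Fix such an $\ell$ and pick $\tau\in N$ whose image in $\Gal(f/K)$ has order $\ell$. Set $c_{\tau}:=\phi^{-1}\circ\phi^{\tau}\in\End^0(A_{\bar K})^{\times}$; the cocycle identity $V_2(c_{\tau})=V_2(\phi)^{-1}\rho_{B,2}(\tau)V_2(\phi)\,\rho_{A,2}(\tau)^{-1}$ on $V_2A$, together with the fact that $\tau$ kills $B[2]$, shows (after a harmless adjustment of $\phi$ at $2$, so that it induces an isomorphism $A[2]\xrightarrow{\sim}B[2]$) that $c_{\tau}$ preserves $A[2]$ and acts there as the inverse of the order-$\ell$ permutation $\tau|_{\RR_f}$. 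In particular $\ell$ divides the order of the image $\mathcal E$ of $\End(A_{\bar K})^{\times}$ in $\Aut(A[2])$; also, writing $\bar S\subseteq\End(A[2])$ for the image of $\End(A_{\bar K})$, the group $\Gal(f/K)$ normalizes $\bar S$ (as $\Gal(K)$ permutes endomorphisms) and is, inside $\Aut(A[2])$, a transitive subgroup of $\Perm(\RR_f)$ of order divisible by $n$ sitting in that normalizer.

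The hard part will be to turn this $2$-torsion information into an element of $D$ of order $\ell$, and this is where both hypotheses — on $n$ and on $\Gal(h/K)$ — really enter. I would first analyze the finite $\F_2$-algebra $\bar S$: since $\ell$ is odd, the order-$\ell$ element $\overline{c_{\tau}}$ has trivial unipotent part and therefore lies in the Teichm\"uller part of some residue field $\F_{2^{e}}$ of $\bar S$, so that $\ell\mid 2^{e}-1$ and $\bmu_\ell\subseteq\F_{2^{e}}$; lifting the corresponding idempotent $2$-adically (possible since $\End(A_{\bar K})\otimes\Z_2$ is $2$-adically complete) and applying Hensel to $t^{\ell}-1$ then puts an element of order $\ell$ in $(D\otimes_{\Q}\Q_2)^{\times}$. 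This is not yet enough, since a number field may acquire $\bmu_\ell$ at a $2$-adic place without containing it globally, and the remaining (and most delicate) point is to descend it to $D$: for this I would use the rigidity forced by transitivity — $\Gal(f/K)$ normalizes $\bar S$ and has order divisible by $n$, which constrains the $D$-module structure of $A[2]=Q_{\RR_f}$ to be compatible with the action of an $\ell$-cycle, and a module-theoretic analysis (building on the semisimplicity properties of $J(C_f)[2]$ from \cite{ZarhinSh03,ZarhinMZ06,ZarhinMRL22}) then yields a genuine embedding $\Q(\zeta_\ell)\hookrightarrow D$. An alternative route to the same conclusion is to choose $c_{\tau}$ ``unitary'' for the Rosati involution of the principal polarization of $A$, so that by Kronecker's theorem its characteristic polynomial is a product of cyclotomic polynomials; since $c_{\tau}$ reduces modulo $2$ to an element of order $\ell$, the polynomial $\Phi_\ell$ occurs, producing $u\in D^{\times}$ of order $\ell$. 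Either way the same $u$ serves for $\End^0(A_{\bar K})$ via $\phi$.

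Finally, I would note how this recovers, and is weaker than, the prime case. When $n$ is prime, $A[2]=Q_{\RR_f}$ has $\F_2$-dimension $n-1=[\Q(\zeta_n):\Q]$, so the field obtained above — of $\Q$-dimension at most $2\dim A=n-1$ — must equal $\Q(\zeta_n)$, and then $A$, hence $B$, is of CM type with multiplication by $\Q(\zeta_n)$, which is Theorem~\ref{endoH} (and the extra assumption ``$2$ is a primitive root mod $n$'' is exactly what makes $\Phi_n$ irreducible modulo $2$, so that the descent in the previous paragraph is immediate). For composite $n$ the module $Q_{\RR_f}$ has too many $\Gal(f/K)$-submodules to force a single cyclotomic field, which is why Theorem~\ref{endoH2} only claims a root of unity of prime order dividing $n$; and the hypothesis that $|\Gal(h/K)|$ be prime to $n$ is used only in the second paragraph, to guarantee that the image of $N$ in $\Gal(f/K)$ still has order divisible by $\ell$, without which $c_{\tau}$ need not detect an $\ell$-cycle at all.
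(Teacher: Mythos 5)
There is a genuine gap, and it sits exactly at the point you yourself flag as ``the hard part.'' Your construction produces, at best, an element $\overline{c_\tau}$ of order $\ell$ in $\Aut(A[2])$ (or, after a Teichm\"uller/Hensel lift, an element of order $\ell$ in $(\End^0(A)\otimes\Q_2)^{\times}$), but the theorem asks for an invertible element of multiplicative order $\ell$ in the global algebra $\End^0(A)$. The single element $c_\tau=\phi^{-1}\circ\phi^{\tau}$ has no reason to be of finite order, and neither of your two routes closes the gap: the ``module-theoretic analysis yielding $\Q(\zeta_\ell)\hookrightarrow D$'' is asserted rather than argued (a number field inside $D\otimes\Q_2$ containing $\mu_\ell$ does not descend to $D$ without a genuinely new input), and the Kronecker route needs $c_\tau$ to be an algebraic integer that is Rosati-unitary, which would require both integrality of $c_\tau$ and $\tau$-invariance of the pulled-back polarization $\phi^{*}\lambda_B$ --- none of which is established, nor is it explained how one could ``choose'' $c_\tau$ unitary. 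There is also an earlier unjustified step: the ``harmless adjustment of $\phi$ at $2$ so that it induces an isomorphism $A[2]\xrightarrow{\sim}B[2]$'' is in general impossible (e.g.\ when $\ker\phi$ meets $A[2]$ in a maximal isotropic subgroup, no modification by multiplication by powers of $2$, or by another choice of isogeny, need make the reduction invertible), and without it even the claim that $\overline{c_\tau}$ has order $\ell$ collapses; the most one can arrange is $\phi(A[2])\neq 0$.

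The paper's proof avoids all of this by a different mechanism, which is the missing idea. One first base-changes to $K_1=K(Y[4])$: by Silverberg's theorem all endomorphisms of $Y=J(C_h)$ are then defined over the base, and $[K_1:K]$ is prime to $n$ (the order of $\Gal(h/K)$ is prime to $n$ and $[K(Y[4]):K(Y[2])]$ is a $2$-power), so $f$ stays irreducible (Lemma \ref{remainIrr}). Because the $c(\sigma)$ are now indexed by the \emph{finite} group $G_4=\Gal(K(X[4])/K)$ and the coefficients $c(\sigma)$ are Galois-invariant, $\sigma\mapsto c(\sigma)^{-1}$ is a group homomorphism, so its image $\Gamma\subset\End^0(Y)^{\times}$ is automatically a finite group --- this is what replaces your attempt to force finite order on a single $c_\tau$. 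Finally, the kernel's image $H$ in $\Gal(f/K)$ cannot act transitively on $\RR_f$: transitivity would make $\mu$ induce a nonzero map of Galois modules $X[2]\to Y[2]$ with $Y[2]$ trivial, contradicting Lemma \ref{invTran}(iii) (self-duality of $Q_{\RR_f}$ plus oddness of $n$); then Lemma \ref{orbits} forces $(G:H)$, hence $\#\Gamma$, to share an odd prime $\ell$ with $n$, and Cauchy's theorem produces the desired element of order $\ell$. Your reduction of the Galois data to $Q_{\RR_f}$ and your use of the hypothesis on $\Gal(h/K)$ are in the right spirit, but without the Silverberg base change, the finiteness of $\Gamma$, and the non-transitivity argument, the passage from $2$-torsion information to a root of unity in $\End^0$ is not proved.
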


\begin{rem}
Assume that the conditions of  Theorem \ref{endoH2} hold. Then $h(x)$ is reducible over $K$.
Indeed, if $h(x)$ is irreducible then $\Gal(h/K)$ acts {\sl transitively} on the $n$-element set $\mathcal{R}_h$
of roots of $h(x)$.
Therefore its order $\#(\Gal(h/K))$ is divisible by $n$,
which gives us a desired contradiction.
\end{rem}

The paper is organized as follows. In Section \ref{order2} we recall basic facts about Galois properties of points of order 2 and 4 on abelian varieties and hyperelliptic jacobians.
We also state Theorem \ref{isogEll} that is  a stronger (but a more technical) version of Theorem \ref{endoH2}.
Section \ref{mainproof} contains the proof of Theorem \ref{isogEll}.
(Notice that Lemma \ref{isogenyNotInv} plays a crucial role in the proof and may be of certain independent interest.)
We prove Theorem \ref{endoH}  in Section \ref{PendoH}.

{\bf Acknowledgements}. I am deeply grateful to the referee, whose comments helped to improve the exposition.

\section{Points of order 2 and 4 on hyperelliptic jacobians}
\label{order2}
Let $K_s \subset \bar{K}$ be the separable algebraic closure of $K$.
Let  $X$  be a positive-dimensional abelian variety   over $K$. If 
$d$ is a positive integer  then we write $X[d]$ for the kernel of
multiplication by $d$ in $X(\bar{K})$.  Recall (\cite[Sect. 6]{Mumford}, \cite[Sect. 8, Remark 8.4]{Milne}) that  if $d$ is {\sl not} divisible
by $\fchar(K)$ then  $X[d]$ is a $\Gal(K)$-submodule of $X(K_s)$;
in addition, $X[d]$  is isomorphic as a commutative group to $(\Z/d\Z)^{2\dim(X)}$.

Let  $K(X[d])$ be the {\sl field of definition} of all torsion points of order dividing $d$ on
$X$.  It is well known \cite[Remark 8.4]{Milne} that $K(X[d])$ lies in $K_s$ and  is a finite Galois extension of $K$.
Let us put
$$\tilde{G}_{d,X}:=\Gal(K(X[d])/K).$$
One may view $\tilde{G}_{d,X}$ as a certain subgroup of $\Aut_{\Z/d\Z}(X[d])$ and $X[d]$ as a faithful $\tilde{G}_{d,X}$-module.
In addition, the structure of the $\Gal(K)$-module on $X[d]$ is induced by the canonical (continuous) {\sl surjective} group homomorphism
$$\tilde{\rho}_{d,X}:\Gal(K)\twoheadrightarrow \Gal(K(X[d])/K)=\tilde{G}_{d,X}.$$

For example, if $d=2\ne \fchar(K)$ then
$X[2]$ is a $2\dim(X)$-dimensional vector space over the $2$-element prime
field $\F_{2}=\Z/2\Z$ and the inclusion $\tilde{G}_{2,X}
\subset \Aut_{\F_{2}}(X[2])$ defines a {\sl faithful} linear
representation of the group  $\tilde{G}_{2,X}$ in the vector
space $X[2]$ over $\F_2$.

\begin{rem}
\label{silver}
\begin{itemize}
\item[(i)]
Let $K(X[4])$ be the field of definition of all points of order dividing $4$ on $X$. 
It is well known that $K(X[4])/K$ is a finite Galois field extension, 
$$K\subset K(X[2])\subset K(X[4])\subset K_s$$
and the Galois group $\Gal(K(X[4])/K(X[2]))$ is a finite commutative group of exponent $2$ or $1$
(e.g., see \cite[Remark 2.2(i)]{ZarhinMRL22}).

\item[(ii)]
Let $d \ge 3$ be an integer that is {\sl not} divisible by $\fchar(K)$. By a result of A. Silverberg,
\cite[Th. 2.4]{Silverberg} 
all the endomorphisms of $X$ are defined over $K(X[d])$. (See \cite{GK,Remond,Pip} for further results
concerning the field of definition of all  endomorphisms of an abelian variety.)
\end{itemize}
\end{rem}

 The following assertion contains Theorem \ref{endoH2} as a special case (when $Y=J(C_h)$).

\begin{thm}
\label{isogEll}
Let 
$K$ be a field of characteristic $\ne 2$. Let $g$ be a positive unteger, $n:=2g+1$, and $f(x)\in K[x]$ a degree  $n$ irreducible polynomial
without repeated roots.  Let us put $X=J(C_f)$.

Let $Y$ be a $g$-dimensional abelian variety  over $K$ such that the order of $\tilde{G}_{2,Y}$ is prime to $n$.
(E.g., $K(Y[2])=K$.)

Suppose that $X$ and $Y$ are isogenous over $\bar{K}$.

Then there is an odd prime $\ell$ dividing $n$ such that both endomorphism algebras $\End^0(X)$ and $\End^0(Y)$ contain 
an invertible element of multiplicative order $\ell$.

In addition, if $n=2g+1$ is a prime then
both $X$ and $Y$ are abelian varieties of CM type over $\bar{K}$ with multiplication by the $n$th cyclotomic field $\Q(\zeta_{n})$.

\end{thm}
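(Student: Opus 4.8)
The key arithmetic input is the Galois-theoretic description of the 2-torsion of a hyperelliptic jacobian. For $X = J(C_f)$ with $f$ irreducible of prime-to... odd degree $n = 2g+1$, the group $\tilde G_{2,X}$ is identified with the image of $\Gal(f/K)$ in its action on the $n$-element root set $\mathcal R_f$, realized inside the "sum-zero" subspace of $\mathbb F_2^{\mathcal R_f}$; since $f$ is irreducible this action is transitive, so $n \mid \#\tilde G_{2,X}$. On the other side, $Y$ has $\#\tilde G_{2,Y}$ prime to $n$ by hypothesis.

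Let me sketch the plan.

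\begin{proof}[Proof sketch of Theorem~\ref{isogEll}]
The plan is to exploit the mismatch between the 2-torsion Galois images of $X$ and $Y$: that of $X$ has order divisible by the odd number $n$ (because $f$ is irreducible, so $\Gal(f/K)$ acts transitively on the $n$ roots), while that of $Y$ has order prime to $n$. An isogeny $\phi\colon X_{\bar K}\to Y_{\bar K}$ over $\bar K$ is, by Silverberg's theorem (Remark~\ref{silver}(ii)) together with the analogous statement at level $2$, defined over $K(X[4])$ after possibly enlarging to handle the points of order $4$; the first step is to descend to a field $L$, finite Galois over $K$, over which $\phi$ is defined and which is controlled by 2- and 4-torsion. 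Pulling back the $\Gal(L/K)$-action through $\phi$ one gets two a priori different actions of $\Gal(L/K)$ on (an $\mathbb F_2$-form of) the same group $X[2]\cong Y[2]$; the discrepancy between them is measured by a cocycle valued in the automorphisms coming from $\phi\sigma(\phi)^{-1}$, which lands in the \emph{odd}-order part of the relevant automorphism group after one takes into account that $Y[2]$ is defined over a field of order prime to $n$.

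The crucial second step is Lemma~\ref{isogenyNotInv} (quoted as available), which — as its name suggests — should say that under these hypotheses the isogeny cannot be an isomorphism of Galois modules, or more precisely that the composite of $\phi$ with a Galois twist differs from $\phi$ by a nontrivial automorphism; iterating this around a suitable $n$-element orbit produces an endomorphism $u$ of $X$ (equivalently of $Y$, transported by $\phi$) satisfying $u^n = 1$ but $u \ne 1$. Writing $u = \zeta$ as an element of $\End^0(X)$, its minimal polynomial divides $x^n - 1 = \prod_{\ell \mid n}\Phi_{\ell}(x)\cdots$ and is not $x-1$, hence is divisible by some $\Phi_\ell(x)$ with $\ell$ an odd prime divisor of $n$; then $u$ has a power of exact multiplicative order $\ell$, which lies in both $\End^0(X)$ and $\End^0(Y)$ since $\phi$ conjugates one endomorphism algebra isomorphically onto the other. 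This yields the first assertion.

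For the final assertion, suppose $n$ is prime. Then $x^n - 1 = (x-1)\Phi_n(x)$ with $\Phi_n$ irreducible of degree $n-1 = 2g$, so the element $u$ constructed above generates a subfield $\mathbb Q(\zeta_n)\hookrightarrow\End^0(X)$ of degree $2g = 2\dim X$ over $\mathbb Q$; a commutative semisimple subalgebra of $\End^0(X)$ of degree $2\dim X$ forces $X_{\bar K}$ to be of CM type, with CM by $\mathbb Q(\zeta_n)$, and the same holds for $Y$ via $\phi$. The main obstacle I expect is the bookkeeping in the first two steps: making precise the descent of $\phi$ to a field whose Galois group is genuinely governed by $2$- and $4$-power torsion, and extracting from the order-coprimality a genuinely \emph{nontrivial} $n$-torsion automorphism rather than merely a cocycle obstruction — this is exactly the content that Lemma~\ref{isogenyNotInv} must be designed to supply, and everything downstream is formal representation theory of $\mathbb Q[x]/(x^n-1)$.
\end{proof}
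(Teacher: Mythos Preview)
Your architecture is right and matches the paper: reduce to $K=K(Y[4])$ (a degree prime to $n$ over the original $K$, so $f$ stays irreducible by a normal-subgroup/orbit argument), use Silverberg to define the isogeny over $K(X[4])$, form the cocycle $\sigma(\mu)=c(\sigma)\mu$ and the homomorphism $\chi(\sigma)=c(\sigma)^{-1}$ into $\End^0(Y)^*$, then extract an order-$\ell$ element and, when $n$ is prime, conclude CM by $\Q(\zeta_n)$ from $[\Q(\zeta_n):\Q]=2g$.

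The gap is in the step you flag yourself. Your guess at the content of Lemma~\ref{isogenyNotInv} is too weak, and your proposed mechanism for upgrading it does not work. Knowing only that $\chi$ is nontrivial (``the isogeny is not Galois-invariant'') does \emph{not} let you ``iterate around an $n$-element orbit'' to produce $u$ with $u^n=1$: there is no reason $\Gal(f/K)$ contains an $n$-cycle when $n$ is composite, and even if $\sigma$ has order dividing $n$ in $G_4$, you still need $\chi(\sigma)\ne 1$, which is exactly what is at issue. The lemma actually asserts the stronger conclusion that $\#\Gamma$ and $n$ are not coprime, where $\Gamma=\mathrm{Im}(\chi)$; one then takes $u\in\Gamma$ of exact order $\ell$ by Cauchy.

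The paper proves this stronger statement as follows. Let $H_4=\ker\chi$ and $H$ its image in $G=\Gal(f/K)$. If $(G:H)$ were prime to $n$, then $H$ would be transitive on $\RR_f$ (normal subgroup of a transitive group, orbit-count divides both $n$ and the index). Replacing $K$ by the $H_4$-fixed field one may assume $\mu$ is Galois-invariant over $K$ while $f$ is still irreducible; then $\mu$ induces a nonzero homomorphism of Galois modules $X[2]\to Y[2]$. But $Y[2]$ is a trivial Galois module (since $K=K(Y[4])\supset K(Y[2])$) and $X[2]\cong Q_{\RR_f}$ admits no nonzero Galois-equivariant map to a trivial module when $f$ is irreducible (self-duality of $Q_{\RR_f}$ plus absence of invariants). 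This contradiction shows $H$ is not transitive, hence $(G:H)$, and therefore $\#\Gamma$, shares a prime $\ell$ with $n$. That transitivity/invariants argument is the missing idea in your sketch.
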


We will prove  Theorem \ref{isogEll} in Section \ref{mainproof}. Our proof is based on the Galois properties of points of order $2$ on hyperelliptic jacobians
that will be discussed in the next subsection.

\subsection{Galois properties}
\label{QRR}
In  this subsection we recall  a  well known  explicit description of the Galois module $J(C_f)[2]$  \cite{Mori2,ZarhinTexel}
for arbitrary separable $f(x)$ and odd $n$. 
Let us start with the $n$-dimensional $\F_2$-vector space
$$\F_2^{\RR_f}=\{\phi:\RR_f \to \F_2\}$$
of all $\F_2$-valued functions on $\RR_f$. The  action of $\Perm(\RR_f)$ on $\RR_f$ provides $\F_2^{\RR_f}$ with the structure of a faithful $\Perm(\RR_f)$-module, which splits into a direct sum
\begin{equation}
 \label{splitting}
\F_2^{\RR_f}=\F_2\cdot {\bf 1}_{\RR_f}\oplus Q_{\RR_f}
\end{equation}
of the one-dimensional subspace $\F_2\cdot {\bf 1}_{\RR_f}$ of constant functions  and the $(n-1)$-dimensional {\sl heart} \cite{Klemm,Mortimer}
$$Q_{\RR_f}:=\{\phi:\RR_f \to \F_2\mid \sum_{\alpha\in\RR_f}\phi(\alpha)=0\}$$
(here we use that $n$ is odd). Clearly, the  $\Perm(\RR_f)$-module is faithful. It remains faithful if we view it as a $\Gal(f/K)$-module.  
There is a $\Perm(\RR_f)$-invariant $\F_2$-bilinear pairing
$$\Psi: \F_2^{\RR_f} \times \F_2^{\RR_f} \to \F_2, \  \phi,\psi \mapsto \sum_{\alpha\in \RR_f}\phi(\alpha)\psi(\alpha)$$
and the splitting \eqref{splitting} is an orthogonal (with respect to $\Psi$) direct sum. Clearly, the restriction of $\Psi$ to $\F_2\cdot {\bf 1}_{\RR_f}$ is nondegenerate and therefore
the restriction of $\Psi$ to $Q_{\RR_f}$ is nondegenerate as well. This implies that the  $\Gal(f/K)$-modules $Q_{\RR_f}$ and its  {\sl dual}
$\Hom_{\F_2}(Q_{\RR_f},\F_2)$ are  {\sl isomorphic}.

The field inclusion $K(\RR_f)\subset K_s$ induces the {\sl surjective} continuous homomorphism
$$\Gal(K)=\Gal(K_s/K)\twoheadrightarrow \Gal(K(\RR_f)/K)=\Gal(f/K),$$
which gives rise to the natural structure of the $\Gal(K)$-module on $Q_{\RR_f}$ such that the image of $\Gal(K)$ in $\Aut_{\F_2}(Q_{\RR_f})$ coincides with
$$\Gal(f/K)\subset \Perm(\RR_f)\hookrightarrow \Aut_{\F_2}(Q_{\RR_f}).$$  This implies that the {\sl Galois modules $Q_{\RR_f}$ and $\Hom_{\F_2}(Q_{\RR_f},\F_2)$ are  isomorphic}.

It is well known (see, e.g., \cite{Mori2,ZarhinTexel}) that the $\Gal(K)$-module $J(C_f)[2]$ and $Q_{\RR_f}$ are canonically  isomorphic. 
This implies that the groups $\tilde{G}_{2,J(C_f)}$ and $\Gal(f)$ are canonically isomorphic. It is also clear that $K(\RR_f)$ coincides with  $K(J(C_f)[2])$.
We will need the following  assertion.

\begin{lem}
\label{invTran}
Suppose that $f(x)$ is irreducible over $K$. Then:

\begin{itemize}
 \item [(i)]

$Q_{\RR_f}$ does not contain nonzero Galois-invariants.
\item [(ii)]
Every Galois-invariant linear functional $Q_{\RR_f} \to \F_2$ is zero.
\item [(iii)]
Let $W$ be a $\F_2$-vector space provided with the trivial action of $\Gal(K)$.
Then every homomorphism of the Galois modules $Q_{\RR_f} \to W$ is zero and every
homomorphism of the Galois modules $J(C_f)[2] \to W$ is zero as well.
\end{itemize}
\end{lem}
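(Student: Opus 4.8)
The plan is to derive all three parts from two elementary inputs: the fact that $\Gal(f/K)$ acts \emph{transitively} on $\RR_f$ because $f$ is irreducible, and the self-duality of the heart $Q_{\RR_f}$ already established in the discussion preceding the lemma, together with the parity assumption that $n$ is odd.

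For (i), I would observe that a function $\phi\in\F_2^{\RR_f}$ is $\Gal(K)$-invariant precisely when it is constant on the $\Gal(f/K)$-orbits in $\RR_f$; transitivity forces $\phi$ to be globally constant, so $(\F_2^{\RR_f})^{\Gal(K)}=\F_2\cdot{\bf 1}_{\RR_f}$. Since $n$ is odd, $\sum_{\alpha\in\RR_f}1=1\ne 0$ in $\F_2$, hence ${\bf 1}_{\RR_f}\notin Q_{\RR_f}$; as the sum \eqref{splitting} is direct, intersecting with $Q_{\RR_f}$ gives $Q_{\RR_f}^{\Gal(K)}=0$.

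For (ii), I would use that $\Psi|_{Q_{\RR_f}}$ is a nondegenerate $\Perm(\RR_f)$-invariant (in particular $\Gal(K)$-equivariant) bilinear form, so $\phi\mapsto\Psi(\phi,-)$ is an isomorphism of Galois modules $Q_{\RR_f}\cong\Hom_{\F_2}(Q_{\RR_f},\F_2)$; under it, a Galois-invariant linear functional corresponds to a Galois-invariant vector of $Q_{\RR_f}$, which is $0$ by (i). For (iii), given a homomorphism of Galois modules $\psi\colon Q_{\RR_f}\to W$, its image $\psi(Q_{\RR_f})$ is a finite-dimensional $\F_2$-subspace of $W$ carrying the trivial action; composing $\psi$ with the coordinate functionals of a basis of that image gives finitely many linear functionals $Q_{\RR_f}\to\F_2$ which are automatically $\Gal(K)$-invariant (the target being trivial), hence all vanish by (ii), so $\psi=0$. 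The corresponding statement for $J(C_f)[2]$ follows immediately from the canonical isomorphism of Galois modules $J(C_f)[2]\cong Q_{\RR_f}$ recorded earlier.

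I do not expect a genuine obstacle here: once the self-duality of $Q_{\RR_f}$ is in hand everything is short, and the only point worth spelling out is the tautology that a $\Gal(K)$-equivariant map into a trivial module is the same as a $\Gal(K)$-invariant linear map, which is what reduces (iii) to (ii) and (ii) to (i).
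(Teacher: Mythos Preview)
Your argument is correct and follows essentially the same route as the paper: transitivity forces an invariant $\phi$ to be constant (the paper phrases this as a direct contradiction with $\sum_\alpha\phi(\alpha)=0$, you phrase it via the direct-sum decomposition, but it is the same observation), then self-duality of $Q_{\RR_f}$ reduces (ii) to (i), and choosing a basis (you take one of the image, the paper of $W$) reduces (iii) to (ii) together with the isomorphism $J(C_f)[2]\cong Q_{\RR_f}$.
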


\begin{proof}
Recall that the irreducibility means that the Galois group acts transitively on $\RR_f$.
Let $\phi \in Q_{\RR_f}$ be a  Galois-invariant function on $\RR_f$. The transitivity implies that $\phi$ is   constant. If $\phi$ is not (identically) zero then
$\phi(\alpha)=1$ for all $\alpha\in \RR_f$ and therefore (since $\phi \in Q_{\RR_f}$)
$$0=\sum_{\alpha\in \RR_f}\phi(\alpha)=n \cdot 1=1 \in \F_2,$$ 
i.e., $0=1$,
 which is absurd. The obtained contradiction proves
that $\phi \equiv 0$.
In order to prove the second assertion of Lemma, recall that  the Galois modules $Q_{\RR_f}$ and $\Hom_{\F_2}(Q_{\RR_f},\F_2)$ are isomorphic. 
Now the second assertion of the lemma follows from the already proven first one.
On the other hand, the third assertion is an immmediate corollary of the second one: one has only to choose a basis of $W$ and recall that the Galois modules 
 $Q_{\RR_f}$ and  $J(C_f)[2]$ are isomorphic.
\end{proof}

\section{Isogenous hyperelliptic jacobians}
\label{mainproof}

We will deduce Theorem \ref{isogEll} from the following auxiliary statements.

\begin{lem}
 \label{orbits}
 Let $G$ be a transitive permutation group of a finite nonempty set $\RR$, and $H$ a normal subgroup of $G$.
 Then the number of $H$-orbits in $\RR$ divides both $\#(\RR)$ and the index $(G:H)$. In particular, if 
 $\#(\RR)$ and $(G:H)$ are relatively prime then $H$ acts transitively on $\RR$.
 
 \end{lem}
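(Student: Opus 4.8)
The plan is to prove Lemma~\ref{orbits} by a standard orbit-counting argument combined with the observation that a normal subgroup has orbits of equal size on a transitive $G$-set. First I would recall that since $H\trianglelefteq G$, the group $G$ permutes the set of $H$-orbits in $\RR$: if $\alpha,\beta\in\RR$ lie in the same $H$-orbit, say $\beta=h\alpha$, then for any $g\in G$ we have $g\beta=ghg^{-1}(g\alpha)$, and $ghg^{-1}\in H$, so $g\alpha$ and $g\beta$ lie in the same $H$-orbit. Thus $G$ acts on the (finite, nonempty) set $\Omega$ of $H$-orbits, and this action is transitive because the $G$-action on $\RR$ already is (any point can be moved to any other, hence any orbit to any other). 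Since $H$ fixes every element of $\Omega$ (it maps each $H$-orbit to itself), the $G$-action on $\Omega$ factors through the quotient $G/H$, so $\#(\Omega)$ divides $\#(G/H)=(G:H)$ by the orbit–stabilizer theorem applied to the transitive $(G/H)$-action on $\Omega$.

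Next I would show $\#(\Omega)$ divides $\#(\RR)$. Because $H\trianglelefteq G$ and $G$ acts transitively on $\RR$, all $H$-orbits in $\RR$ have the same cardinality: given two $H$-orbits $H\alpha$ and $H\beta$, pick $g\in G$ with $g\alpha=\beta$; then $g$ maps $H\alpha$ bijectively onto $gH\alpha = Hg\alpha = H\beta$, so $\#(H\alpha)=\#(H\beta)$. Writing $m$ for this common size and $r=\#(\Omega)$ for the number of orbits, the orbits partition $\RR$, whence $\#(\RR)=rm$ and in particular $r\mid \#(\RR)$.

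Combining the two divisibilities, $\#(\Omega)=r$ divides $\gcd\bigl(\#(\RR),(G:H)\bigr)$; if $\#(\RR)$ and $(G:H)$ are coprime this forces $r=1$, i.e.\ $H$ is transitive on $\RR$, which is the final assertion. I do not anticipate a serious obstacle here — the only point requiring a little care is the equal-size-of-orbits claim, which genuinely uses normality of $H$ (it fails for arbitrary subgroups), and the fact that the $G$-action on $\Omega$ descends to $G/H$; both are elementary once stated precisely.
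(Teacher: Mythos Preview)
Your proof is correct and follows essentially the same route as the paper: both arguments use that, by normality of $H$, all $H$-orbits have equal size (so their number divides $\#(\RR)$), and that the transitive $G$-action on the set of $H$-orbits descends to $G/H$ (so their number divides $(G:H)$). The only difference is cosmetic --- the paper cites \cite{Passman} for the equal-size claim and the block structure, while you supply these short verifications directly.
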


\begin{lem}
 \label{remainIrr} Let $f(x)$ be a degree $n$ irreducible polynomial over a field $K$ and without repeated roots.
 Let $K_1/K$ be a finite Galois field extension, whose degree is prime to $n$.
 Then $f(x)$ remains irreducible over $K_1$. In particular, the order of Galois group $\Gal(f/K_1)$ is divisible by $n$.
 \end{lem}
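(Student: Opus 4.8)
The plan is to reduce the statement to elementary field-degree bookkeeping; the hypothesis that $K_1/K$ is Galois plays no essential role beyond guaranteeing that $[K_1:K]$ is finite.

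First I would fix a root $\alpha\in\bar{K}$ of $f(x)$, so that $[K(\alpha):K]=n$ since $f(x)$ is irreducible over $K$. Let $p(x)\in K_1[x]$ be the minimal polynomial of $\alpha$ over $K_1$ and set $d:=\deg p=[K_1(\alpha):K_1]$, where $K_1(\alpha)=K_1\cdot K(\alpha)$ is the compositum taken inside $\bar{K}$. Because $p(x)$ divides $f(x)$ in $K_1[x]$, we have $d\mid n$, and the whole point is to upgrade this to $d=n$.

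Next I would compute $[K_1(\alpha):K]$ in two ways by the tower law. From $K\subseteq K_1\subseteq K_1(\alpha)$ one gets $[K_1(\alpha):K]=d\cdot[K_1:K]$, while from $K\subseteq K(\alpha)\subseteq K_1(\alpha)$ one gets that $n=[K(\alpha):K]$ divides $[K_1(\alpha):K]$. Hence $n\mid d\cdot[K_1:K]$, and since $\gcd(n,[K_1:K])=1$ by hypothesis, $n\mid d$. Together with $d\mid n$ this forces $d=n$, i.e.\ $f(x)$ is irreducible over $K_1$. For the ``in particular'' clause, note that $f(x)$ has no repeated roots, hence is separable, so $\Gal(f/K_1)=\Gal(K_1(\RR_f)/K_1)$ acts faithfully on the $n$-element set $\RR_f$; irreducibility of $f$ over $K_1$ makes this action transitive, whence the orbit--stabilizer theorem yields $n=\#(\RR_f)\mid \#(\Gal(f/K_1))$.

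I do not anticipate a genuine obstacle: the only thing needing a little care is keeping the two towers and the divisibilities straight, and observing that one really uses only that $[K_1:K]$ is finite and prime to $n$, not that $K_1/K$ is Galois (the latter is available for free from the context in which the lemma will be applied).
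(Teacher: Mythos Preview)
Your argument is correct and takes a genuinely different route from the paper. The paper's proof is group-theoretic: since $K_1/K$ is Galois, $\Gal(f/K_1)$ is a \emph{normal} subgroup of $\Gal(f/K)$ of index dividing $[K_1:K]$, hence of index prime to $n$; Lemma~\ref{orbits} then forces $\Gal(f/K_1)$ to act transitively on $\RR_f$, i.e., $f$ stays irreducible over $K_1$. Your approach bypasses the group theory entirely, using only the tower law and a $\gcd$ argument, and as you observe it does not need the Galois hypothesis at all---so you actually prove a slightly more general statement. The paper's route, by contrast, reuses Lemma~\ref{orbits}, which is needed elsewhere in the proof of Theorem~\ref{isogEll}, so it is economical in context.

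One small slip: from $p(x)\mid f(x)$ in $K_1[x]$ you obtain only $d\le n$, not $d\mid n$. (The divisibility $d\mid n$ is in fact true here, but it uses the Galois hypothesis: $\Gal(K_1/K)$ permutes the monic irreducible factors of $f$ over $K_1$ transitively, so they all have the same degree $d$ and $n=kd$.) Fortunately your argument only needs $d\le n$: combined with $n\mid d$ this already forces $d=n$, so nothing is lost.
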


 \begin{lem}
 \label{isogenyNotInv}
 We keep the notation and assumptions of Theorem \ref{isogEll}.
 
 Suppose that  $K=K(Y[4])$.  Then there is a nontrivial group homomorphism
 $$\chi: \Gal(K(X[4])/K) \to \End^0(Y)^{*},$$
 whose image
 $$\Gamma:=\mathrm{Im}(\chi)\subset \End^0(Y)^{*}$$
 is a finite group that enjoys the following property.  
 
 The integers $n$ and  $\#(\Gamma)$ are not relatively prime.
 In other words,
 there is an odd prime $\ell$ that divides both $n$ and  $\#(\Gamma)$.
  \end{lem}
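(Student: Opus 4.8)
The plan is to build $\chi$ out of a single $\bar K$-isogeny $\phi\colon X\to Y$ (which exists by the running hypothesis of Theorem~\ref{isogEll}), to use Silverberg's theorem to force $\phi$ to be defined over $K(X[4])$, and then to obtain the non-coprimality of $n$ and $\#\Gamma$ by a bootstrap over the field of definition of $\phi$: there Lemma~\ref{remainIrr} keeps $f$ irreducible, and a $2$-adic argument produces a contradiction.

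First I would apply Silverberg's theorem (Remark~\ref{silver}(ii)) to the abelian variety $X\times Y$ with $d=4$ (legitimate since $\fchar(K)\ne 2$): every endomorphism of $(X\times Y)_{\bar K}$ is defined over $K\big((X\times Y)[4]\big)=K(X[4])\cdot K(Y[4])$. Since $K(Y[4])=K$ by assumption, this field is $L:=K(X[4])$, so every element of $\Hom^{0}(X_{\bar K},Y_{\bar K})$ — in particular $\phi$ and its quasi-inverse $\phi^{-1}$ — is defined over $L$, while $\End^{0}(Y)=\End^{0}(Y_{\bar K})$ is pointwise fixed by $\Gal(K)$. As $\phi$ is invertible in $\Hom^{0}(X_{\bar K},Y_{\bar K})$, for each $\sigma\in\Gal(L/K)$ there is a unique $u(\sigma):=\phi^{\sigma}\circ\phi^{-1}\in\End^{0}(Y)^{*}$ with $\phi^{\sigma}=u(\sigma)\circ\phi$; applying $\tau$ to this identity and using that $\tau$ fixes $\End^{0}(Y)$ pointwise gives the expected cocycle relation, so that $\chi(\sigma):=u(\sigma)^{-1}$ is a group homomorphism $\Gal(L/K)\to\End^{0}(Y)^{*}$. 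Its kernel is exactly $\Gal(L/K_{\phi})$, where $K_{\phi}\subseteq L$ is the field of definition of $\phi$; hence $K_{\phi}/K$ is Galois, $\Gamma:=\mathrm{Im}(\chi)\cong\Gal(K_{\phi}/K)$ is finite, and $\#\Gamma=[K_{\phi}:K]$.

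Next I would show $\gcd(n,\#\Gamma)>1$ by contradiction: suppose $[K_{\phi}:K]$ is prime to $n$. Since $\phi$ is defined over $K_{\phi}$, the varieties $X$ and $Y$ are isogenous over $K_{\phi}$, so $\phi$ induces an isomorphism of $\Gal(K_{\phi})$-modules $V_{2}(X)\cong V_{2}(Y)$. Because $K(Y[4])=K\subseteq K_{\phi}$, the image of $\Gal(K_{\phi})$ in $\Aut_{\Z_{2}}(T_{2}(Y))=\GL_{2g}(\Z_{2})$ lies in the congruence subgroup modulo $4$, which is pro-$2$; hence the image of $\Gal(K_{\phi})$ in $\Aut(V_{2}(Y))\cong\Aut(V_{2}(X))$ is pro-$2$, and since $T_{2}(X)\subset V_{2}(X)$ is $\Gal(K_{\phi})$-stable, its quotient $\Gal(K_{\phi}(X[2])/K_{\phi})=\Gal(f/K_{\phi})$ acting on $X[2]=T_{2}(X)/2T_{2}(X)$ is a $2$-group. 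On the other hand, $K_{\phi}/K$ is Galois of degree prime to $n$ and $f$ is irreducible over $K$, so by Lemma~\ref{remainIrr} $f$ remains irreducible over $K_{\phi}$; thus $\Gal(f/K_{\phi})$ acts transitively on the $n$-element set $\RR_{f}$ and $n\mid\#\Gal(f/K_{\phi})$. Since $n$ is odd and $\ge 3$, this order has an odd prime divisor, contradicting that $\Gal(f/K_{\phi})$ is a $2$-group. Therefore $\gcd(n,\#\Gamma)>1$; every common prime divisor $\ell$ is then odd (as $n$ is) and divides $n$, and in particular $\#\Gamma\ge\ell>1$, so $\chi$ is nontrivial.

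The main obstacle is controlling the field of definition of the isogeny: it is the combination of Silverberg's theorem for $X\times Y$ with the hypothesis $K(Y[4])=K$ that forces $\phi$ to be rational over $K(X[4])$ and lets $\chi$ be defined on $\Gal(K(X[4])/K)$ rather than on some larger Galois group, while the same hypothesis — inherited by $K_{\phi}$ — together with Lemma~\ref{remainIrr} is what makes the bootstrap over $K_{\phi}$ go through. A minor technical point worth flagging is that the $2$-adic comparison uses $V_{2}$ rather than an isogeny of odd degree, so no assumption on $\deg\phi$ is needed.
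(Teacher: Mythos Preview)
Your construction of $\chi$ via Silverberg's theorem applied to $X\times Y$ is the same as the paper's, but your argument for $\gcd(n,\#\Gamma)>1$ takes a genuinely different route. The paper first normalizes the isogeny $\mu$ so that $\mu(X[2])\ne\{0\}$, and then proves (Proposition~\ref{GneH}) that the image $H\subset\Gal(f/K)$ of $\ker\chi$ cannot act transitively on $\RR_f$: if it did, $\mu$ would be defined over the fixed field of $\ker\chi$ and would yield a nonzero Galois-equivariant map $X[2]\to Y[2]$, impossible by Lemma~\ref{invTran}(iii) since $Y[2]$ is a trivial Galois module while $f$ is irreducible; Lemma~\ref{orbits} then supplies the common prime factor. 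You instead pass to the $2$-adic Tate module: over $K_\phi$ the isogeny identifies $V_2(X)$ with $V_2(Y)$ as Galois modules, and since $Y[4]$ is $K$-rational the image of $\Gal(K_\phi)$ in $\Aut(T_2(Y))$ sits in the pro-$2$ congruence subgroup mod~$4$; reducing mod~$2$ on the $X$-side makes $\Gal(f/K_\phi)$ a $2$-group, which collides with Lemma~\ref{remainIrr} since $n$ is odd. Your argument is tidier in that it needs no normalization of the isogeny and bypasses Proposition~\ref{GneH}, Lemma~\ref{invTran}, and the explicit heart description of $J(C_f)[2]$ in \S\ref{QRR}; the paper's approach, on the other hand, stays entirely at the level of $2$- and $4$-torsion and makes the role of the irreducibility of $f$ visible through the module $Q_{\RR_f}$.
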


 \begin{proof}[Proof of Theorem \ref{isogEll} (modulo Lemmas \ref{remainIrr} and \ref{isogenyNotInv})]
 
 The degree satisfies
$$[K(Y[4]):K]=[K(Y[4]):K(Y[2])] \cdot [K(Y[2]):K].$$
By assumption we know that $[K(Y[2]):K]=\#(\tilde{G}_{2,Y})$ is prime to $n$. Thanks to Remark \ref{silver}(i), $[K(Y[4]):K(Y[2])]$ is either $1$ or a power of $2$.
This implies that the product $[K(Y[4]):K]$ is also prime to the odd integer $n$.
In light of Lemma \ref{remainIrr}, $f(x)$ remains irreducible over the field $K_1=K(Y[4])$. Replacing $K$ by $K_1$, we may assume
that $K=K(Y[4])$. 

By Lemma \ref{isogenyNotInv}, there is a nontrivial finite subgroup 
$\Gamma\subset \End^0(Y)^{*}$,
whose order is divisible by a certain
 prime divisor $\ell$ of $n$. Then $\Gamma$ contains an element $u$ of order $\ell$ that is an invertible element in $\End^0(Y)$ of multiplicative
order $\ell$. Hence, the $\Q$-subalgebra $\Q[u]$ of
$\End^0(Y)$ generated by $u$ is isomorphic to a quotient of  the direct sum $\Q\oplus \Q(\zeta_{\ell})$.  Since $\ell$ is odd, 
$\Q[u]$ is isomorphic 
either to  $\Q(\zeta_{\ell})$ or to  $\Q\oplus \Q(\zeta_{\ell})$.

If $n$ is a prime then $\ell=n=2g+1$ and the same arguments as in \cite[Sect. 4, proof of Prop. 2.4]{ZarhinMRL22}  (based
on \cite[Ch. II, Prop. 1]{Shimura})
show that the $\Q$-subalgebra 
$\Q[u]$ of $\End^0(Y)$  is isomorphic to $\Q(\zeta_n)$ and therefore $Y$ (and also $X$) is an abelian variety of CM type
with multiplication by $\Q(\zeta_n)$.

\end{proof}

\begin{proof}[Proof of Lemma \ref{orbits}]
Let us put
$$n=\#(\RR), \ h=(G:H).$$
Since $G$ acts transitively on the $n$-element set $\RR$, all the orbits of  the normal subgroup $H$  have the same cardinality, say, $r$ \cite[Prop. 4.4 on p. 22]{Passman}.
This implies that $m=n/r$ is the {\sl number of all $H$-orbits} in $\RR$. In particular, both $m$ and $r$ divide $n$.

 Let $B$ be an $r$-element orbit of $H$ in $\RR$.
Then $B$ is a {\sl subset of imprimitivity} for $G$ \cite[Proof of Prop. 4.4 on p. 22]{Passman}.  Let $O$ be the   $m$-element set of $H$-orbits in $\RR$.
Now the transitivity of the action of $G$ on $\RR$ and the normality of $H$ implies that $G$ acts transitively on $O$ and this action factors through the {\sl transitive} action
of
the  {\sl quotient} $G/H$  on $O$.
This implies that $m=\#(O)$ divides $\#(G/H)=(G:H)=h$.  So, $m$ divides both $n$ and $h$, which ends the proof.
\end{proof}

\begin{proof}[Proof of Lemma \ref{remainIrr}]
Since $K_1/K$ is Galois, the group $\Gal(f/K_1)$ is a {\sl normal} subgroup of $\Gal(f/K)$, whose index $h$ divides $[K_1:K]$ and therefore is also prime to $n$.
It follows from  Lemma \ref{orbits} applied to
$$\RR=\RR_f, \ G=\Gal(f/K), \ H=\Gal(f/K_1)$$
that $\Gal(f/K_1)$  acts transitively on $\RR_f$, i.e., $f(x)$ is irreducible over $K_1$. 
\end{proof}

\begin{proof}[Proof of Lemma \ref{isogenyNotInv}]
In light of the theorem of Silverberg (Remark \ref{silver}(ii)), all endomorphisms of $Y$ are defined over $K$.
Applying the theorem of Silverberg (see Remark \ref{silver}(ii) above) to $X\times Y$, we conclude that all the homomorphisms from $X$ to $Y$ are defined over $K(X[4])$.

Let $\mu: X \to Y$ be an isogeny. Dividing, if necessary, $\mu$ by a suitable power of $2$, we may and will assume that
\begin{equation}
\label{notZero}
\mu(X[2]) \ne \{0\}.
\end{equation}
Let us put
$$G_4:=\Gal(K(X[4])/K), \ G=\Gal(K(X[2])/K)=\Gal(f/K).$$
We know that $\mu$ is defined over $K(X[4])$. This allows us to define for each $\sigma \in G_4$ the isogeny
$\sigma(\mu):X \to Y$, which is the Galois-conjugate of $\mu$ (recall that both $X$ and $Y$ are defined over $K$). 
Then the same construction as in \cite[Sect. 4, proof of Prop. 2.4]{ZarhinMRL22} allows us to define a map
$$c: G_4 \to \End^0(Y)^{*}, \ \sigma \mapsto c(\sigma)$$
where $c(\sigma)$ is determined by
$$\sigma(\mu)=c(\sigma)\mu \ \forall \sigma \in G_4=\Gal(K(X[4])/K).$$
We have for each $\sigma,\tau \in G_4$
$$c(\sigma\tau)\mu=\sigma\tau(\mu)=\sigma(\tau(\mu))= \sigma(c(\tau)\mu)=c(\tau) \sigma(\mu)=c(\tau)c(\sigma)\mu$$
(here we use that all elements of $\End(Y)$ are defined over $K$, i.e., are $G_4$-invariant).
Therefore
$$c(\sigma\tau)=c(\tau)c(\sigma) \ \forall \sigma, \tau \in G_4=\Gal(K(X[4])/K).$$
This means that the map
$$\chi: G_4=\Gal(K(X[4])/K) \to \End^0(Y)^{*}, \ \sigma \mapsto \chi(\sigma)=c(\sigma)^{-1}$$
is a {\sl group homomorphism}. Let $\Gamma \subset \End^0(Y)^{*}$ be the image of $\chi$, which is a finite
subgroup of $\End^0(Y)^{*}$.  We need to check that there is a prime divisor $\ell$ of $n$ that divides $\#(\Gamma)$.

Let $H_4$ be the kernel of $\chi$, i.e.,
\begin{equation}
\label{H4}
H_4=\{\sigma \in G_4\mid \sigma(\mu)=\mu\}.
\end{equation}
By definition, $H_4$ is a normal subgroup of $G_4$. Let $H$ be the image of $H_4$ under the {\sl surjective} group homomorphism
$$G_4=\Gal(K(X[4])/K)\twoheadrightarrow \Gal(K(X[2])/K)=\Gal(f/K)=G.$$
The surjectiveness implies that $H$ is a normal subgroup of $G$ and the index $(G:H)$ divides 
$$(G_4:H_4)=\#(\Gamma).$$

In order to finish the proof, we  need the following assertion that will be proven at the end of this section.

\begin{prop}
\label{GneH}
The subgroup $H$ of $G$ is not transitive on $\RR_f$. 
\end{prop}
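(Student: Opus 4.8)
The plan is to argue by contradiction. Suppose $H$ acts transitively on $\RR_f$. Since $K(X[4])/K$ is Galois, $H_4$ is normal in $G_4$ and its image $H$ is normal in $G=\Gal(f/K)$; moreover $(G:H)$ divides $\#(\Gamma)$. If $\#(\Gamma)$ were coprime to $n$, then $(G:H)$ would be coprime to $n$ as well, and (combined with the transitivity hypothesis on $H$, or directly from Lemma \ref{orbits}) we would not yet have a contradiction --- so the point is rather to show that transitivity of $H$ is genuinely impossible, which will force $\ell \mid \#(\Gamma)$. The key is that the subfield $L:=K(X[4])^{H_4}$ is fixed by $H_4$, so by the Galois correspondence $K(X[2])\subseteq L^{\mathrm{cl}}$ where $L^{\mathrm{cl}}$ corresponds to $H$ inside $G$; and over $L$ the isogeny $\mu$ is defined \emph{and} fixed by the Galois action, by the very definition \eqref{H4} of $H_4$.

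The heart of the argument is then the following: being $H_4$-invariant, $\mu$ restricts to a $\Gal(K(X[4])/L)$-equivariant — hence, after descent, to a genuinely $L$-rational — homomorphism $X \to Y$, and in particular the induced map on $2$-torsion $\bar\mu: X[2] \to Y[2]$ is a homomorphism of $\Gal(L)$-modules. Now $L$ contains $K(Y[4])=K$ (we are in the reduced situation $K=K(Y[4])$ from the statement of Lemma \ref{isogenyNotInv}), so $Y[2]$ is a \emph{trivial} $\Gal(L)$-module. On the other hand, if $H$ — equivalently the image of $\Gal(L)$ in $\Aut_{\F_2}(X[2])$ — acts transitively on $\RR_f$, then $f(x)$ is irreducible over $L' := K(X[2])^{H} $, but more to the point the $\Gal$-module $X[2]\cong Q_{\RR_f}$ with transitive action has \emph{no nonzero homomorphism to a trivial module}: this is precisely Lemma \ref{invTran}(iii) applied over the appropriate field. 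Hence $\bar\mu(X[2])=\{0\}$.

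This contradicts \eqref{notZero}, since $\mu(X[2])\neq\{0\}$ and $\bar\mu$ is exactly the map induced by $\mu$ on $X[2]$. Therefore $H$ cannot act transitively on $\RR_f$, proving the proposition. I should be careful on one technical point: to invoke Lemma \ref{invTran}(iii) I need the image of the relevant Galois group in $\Aut_{\F_2}(X[2])$ to be transitive on $\RR_f$ and $\mu$ to be defined and Galois-invariant over the corresponding field; the cleanest way is to work over the fixed field $K(X[2])^{H}\cdot$(something) — concretely, over the subfield of $K(X[4])$ cut out by $H_4$, noting $H_4 = \Gal(K(X[4])/K(X[2]))\cap H_4$ issues are handled because $H$ was \emph{defined} as the image of $H_4$ in $G$. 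I expect this bookkeeping about which field $\mu$ descends to, and the compatibility between "$H$ transitive on $\RR_f$" and "the module $X[2]$ restricted to the subgroup has no invariant functionals/quotients", to be the main obstacle; once the trivial-module-quotient vanishing of Lemma \ref{invTran} is correctly aligned with the transitivity of $H$, the contradiction with \eqref{notZero} is immediate.
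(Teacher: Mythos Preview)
Your proposal is correct and follows essentially the same route as the paper: assume $H$ transitive, pass to the fixed field $L=K(X[4])^{H_4}$ (the paper phrases this as ``replacing $K$ by $K(X[4])^{H_4}$''), observe that $\mu$ is $\Gal(L)$-equivariant on $2$-torsion while $Y[2]$ is a trivial module over $L$, and then invoke Lemma~\ref{invTran}(iii) (which applies because $\Gal(f/L)\cong H$ is transitive, so $f$ stays irreducible over $L$) to force $\mu(X[2])=0$, contradicting \eqref{notZero}. Your hesitation about the bookkeeping is unwarranted: the identification $\Gal(f/L)\cong H$ follows directly from $L\cap K(X[2])=K(X[2])^{H}$, which in turn holds because $H$ is by definition the image of $H_4$ in $G$.
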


{\bf End of Proof of Lemma \ref{isogenyNotInv} (modulo Proposition \ref{GneH})}
Combining Proposition \ref{GneH} with Lemma \ref{orbits}, we conclude that $(G:H)$ is {\sl not} prime to $n$.
Hence, there is a prime $\ell$ that divides both $(G:H)$ and $n$ (recall that $n$ is odd). Since $(G:H)$ divides 
$(G_4:H_4)$, we conclude thast $\ell$ divides $(G_4:H_4)=\#(\Gamma)$, which ends the proof.

\end{proof}

 \begin{proof}[Proof of  Proposition \ref{GneH}]
 Suppose that $H$ is   transitive. Then $f(x)$ remains {\sl irreducible} over the (sub)field $K(X[2])^{H}$ of $H$-invariants.
  Replacing $K$ by its overfield $K(X[4])^{H_4}$, we may and will assume that
  $$H_4=\Gal(K(X[4])/K), \ H=\Gal(K(X[2])/K)=\Gal(f/K).$$
  In particular,  
  \begin{equation}
  \label{muGalois}
  \sigma(\mu)=\mu\  \forall \sigma \in H_4=\Gal(K(X[4])/K).
  \end{equation}
  Recall that
  \begin{equation}
  \label{sigmaX4}
  \sigma(\mu)(\sigma(x))=\sigma(\mu(x))\  \forall \sigma \in \Gal(K(X[4])/K), \
  x \in X(K[4]).
  \end{equation}
  Combining \eqref{sigmaX4} with \eqref{muGalois} and taking into account that 
  $$X[2]\subset X[4] \subset X(K[4]),$$
  we obtain that
  \begin{equation}
  \label{sigmaX2}
  \mu(\sigma(x))=\sigma(\mu(x))\  \forall \sigma \in\Gal(K(X[4])/K), \
  x \in X(K[2]).
  \end{equation}
  Combining \eqref{notZero} with \eqref{sigmaX2}, we obtain that $\mu$ induces a {\sl nonzero} homomorphism of Galois modules
  $$X[2] \to Y[2].$$
  Notice that the Galois module $Y[2]$ is {\sl trivial}, because 
  $$K \subset K(Y[2])\subset K(Y[4])=K,$$
  i.e., $K(Y[2])=K$. On the other hand, the irreducibility of $f(x)$ over $K$ implies (thanks to Lemma \ref{invTran}(iii))
  that every homomorphism of the Galois module $X[2]$ to the trivial Galois module $Y[2]$ is zero. The obtained contradiction proves that $H$ is not transitive. 
\end{proof}

\section{Proof of  Theorem \ref{endoH}}
\label{PendoH}
So, $n$ is an odd prime, both $f(x)$ and $h(x)\in K[x]$ are degree $n$ polynomials without repeated roots,
$f(x)$ is irreducible and $h(x)$ is reducible. Since $n$ is a prime,  the reducibility of $h(x)$ implies that 
the order of $\Gal(h/K)$ is prime to $n$ (see \cite[Lemma 2.6]{ZarhinMRL22}).
This implies that if we put $Y=J(C_h)$ then the group  $\tilde{G}_{2,Y}=\Gal(h/K)$ has order that is prime to $n$. 
Now the desired result follows readily from Theorem \ref{isogEll} (applied to $\ell=n$).

\end{document}